\theoremstyle{plain}
\newtheorem{theorem}{Theorem}
\newtheorem{lemma}[theorem]{Lemma}
\newtheorem{proposition}[theorem]{Proposition}
\newtheorem{corollary}[theorem]{Corollary}
\theoremstyle{definition}
\newtheorem{definition}[theorem]{Definition}
\newcommand{\N}{\mathbb{N}}
\newcommand{\Pp}{\mathcal{P}}
\title{A note on increasing paths in countable hypergraphs}
\author{Valentino Vito}
\address{Faculty of Computer Science\\
Universitas Indonesia\\
Depok 16424, Indonesia}
\email{valentino.vito11@ui.ac.id}
\begin{document}

\subjclass[2020]{Primary 05C38; Secondary 05C63, 05C65, 05C78}
\keywords{Increasing path, countable hypergraph, graph labeling, hypergraph dual}

\begin{abstract}
An old result of M\"uller and R\"odl states that a countable graph $G$ has a subgraph whose vertices all have infinite degree if and only if for any vertex labeling of $G$ by positive integers, an infinite increasing path can be found. They asked whether an analogous equivalence holds for edge labelings, which Reiterman answered in the affirmative. Recently, Arman, Elliott, and R\"odl extended this problem to linear $k$-uniform hypergraphs $H$ and generalized the original equivalence for vertex labelings. They asked whether Reiterman's result for edge labelings can similarly be extended. We confirm this for the case where $H$ admits only finitely many Berge cycles.
\end{abstract}

\maketitle

\section{Introduction}

Let $G$ be a countable graph, and suppose that $\phi\colon V(G) \to \N$ is a bijective vertex labeling of $G$. An \emph{infinite increasing path} in $G$ is an infinite path $v_1v_2 \dots$ of vertices such that $\phi(v_i) < \phi(v_{i+1})$ for $i \ge 1$. Likewise, given a bijective edge labeling $\phi\colon E(G) \to \N$ of $G$, an infinite increasing path in $G$ is an infinite path $e_1e_2 \dots$ of edges such that $\phi(e_i) < \phi(e_{i+1})$ for $i \ge 1$. Throughout the rest of this note, vertex and edge labelings are always assumed to be bijective.

A necessary and sufficient condition for the existence of an infinite increasing path in $G$ given an arbitrary vertex (resp.\ edge) labeling has long been known. Theorem \ref{thm_graph} is the combined result of M\"uller and R\"odl \cite{muller}---who proved (i) $\leftrightarrow$ (ii) in 1982---and Reiterman \cite{reiterman} who showed that (i) $\leftrightarrow$ (iii) in 1989, answering the problem posed in \cite{muller}.

\begin{theorem}[M\"uller and R\"odl \cite{muller} and Reiterman \cite{reiterman}]\label{thm_graph}
Let $G = (V, E)$ be a countable graph. The following are equivalent:
\begin{enumerate}[\normalfont(i)]
    \item $G$ has a subgraph $G'$ in which every vertex has infinite degree in $G'$.
    \item For any vertex labeling $\phi\colon V \to \N$, there exists an infinite increasing path in $G$.
    \item For any edge labeling $\phi\colon E \to \N$, there exists an infinite increasing path in $G$.
\end{enumerate}
\end{theorem}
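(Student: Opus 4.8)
The plan is to prove the two easy implications (i)$\Rightarrow$(ii) and (i)$\Rightarrow$(iii) by a direct greedy construction, and the two hard converses (ii)$\Rightarrow$(i) and (iii)$\Rightarrow$(i) by contraposition, building an adversarial labeling when (i) fails.

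For (i)$\Rightarrow$(ii), I would fix the subgraph $G'$ in which every vertex has infinite degree and an arbitrary labeling $\phi$. The increasing path is built greedily: having chosen $v_1,\dots,v_k$, the endpoint $v_k$ has infinitely many neighbours in $G'$, and since only finitely many integers are at most $\phi(v_k)$ and only finitely many vertices have been used, I can pick a neighbour $v_{k+1}\in G'$ with $\phi(v_{k+1})>\phi(v_k)$ distinct from the previous vertices. The same argument proves (i)$\Rightarrow$(iii): starting from an edge of $G'$, the current endpoint is incident with infinitely many edges, cofinitely many of larger label, and all but finitely many avoid the used vertices, so the increasing edge-path extends forever.

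For (ii)$\Rightarrow$(i) I would argue contrapositively. Assuming $G$ has no subgraph with all infinite degrees, delete vertices of finite degree transfinitely (simultaneously at successor stages, intersecting at limits); the hypothesis is exactly the statement that this process eventually deletes all of $V$, assigning each vertex an ordinal rank. The defining property is that each vertex has only finitely many neighbours of equal or higher rank, so every rank level induces a \emph{locally finite} graph. It then remains to turn this well-founded structure into a labeling $\phi\colon V\to\N$ admitting no infinite increasing path. Here the naive idea of making $\phi$ decrease with the rank fails, since a transfinite ordinal does not order-embed into $\N$; instead I would aim directly for the weaker goal that \emph{every infinite path contains infinitely many descents}. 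On a locally finite stratum this can be arranged by partitioning it into finite layers with edges only within or between consecutive layers and assigning the layers alternating high and low blocks of labels, so that crossing a \emph{peak} layer forces a descent; the finiteness of the equal-or-higher-rank neighbourhoods is what lets me splice these strata together and treat the infinite-degree vertices as separating points whose offending neighbours are pushed into descents.

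The implication (iii)$\Rightarrow$(i) is Reiterman's theorem and is where I expect the \textbf{main obstacle}. It is tempting to pass to the line graph $L(G)$, since an edge labeling of $G$ is a vertex labeling of $L(G)$; but this reduction is invalid, because a path of edges in $G$ must use distinct vertices, whereas a path in $L(G)$ may revisit a vertex of $G$. For instance, in a \emph{double star} with two adjacent centres of infinite degree, every pair of edges at a centre is adjacent in $L(G)$, so $L(G)$ has vertices of infinite degree, yet $G$ has no infinite path at all. Consequently the edge case cannot be inferred from the vertex case and must be handled directly. I would run the same kernel/rank analysis, but now the labeling must create descents along edge-paths while respecting that each edge is shared by two vertices, so the orderings of the edges around the two endpoints are coupled. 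Managing this coupling—rather than any single clever gadget—is the technical heart of the argument and the reason the edge version resisted proof for several years after the vertex version. Together, the four implications give (i)$\leftrightarrow$(ii) and (i)$\leftrightarrow$(iii), hence the full equivalence.
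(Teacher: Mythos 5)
The paper never proves Theorem~\ref{thm_graph}; it imports it from \cite{muller} and \cite{reiterman}, so your attempt can only be measured against those original arguments, and against them it covers only the easy half. Your greedy constructions for (i)$\rightarrow$(ii) and (i)$\rightarrow$(iii) are correct and complete: since the labeling is injective, only finitely many vertices (resp.\ edges) carry labels below the current one, only finitely many vertices have been used, and in a simple graph the current endpoint sends at most one edge to each used vertex, so the infinite degree of the endpoint in $G'$ always leaves a valid extension. The two converses, however, are the actual content of the theorem, and neither is established.

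For (ii)$\rightarrow$(i) your setup (transfinitely delete finite-degree vertices, note that failure of (i) means every vertex gets an ordinal rank, and that each vertex has only finitely many neighbours of equal or higher rank) is indeed the right skeleton and matches M\"uller--R\"odl's starting point. But the sentence about splicing the strata together is where the whole proof lives, and it is asserted, not proved. The difficulty it hides: a vertex of rank $\alpha$ may have \emph{infinitely many} neighbours of strictly lower rank, and there can be transfinitely many nonempty strata (any countable ordinal can occur), so the high/low layer alternation, which does force descents inside a single locally finite stratum, says nothing about an infinite path that repeatedly drops into lower strata and climbs back through the finitely many upward edges of \emph{other} vertices; controlling such wandering paths with a single bijection $\phi\colon V \to \N$ is exactly the recursion that \cite{muller} carries out and that your sketch omits. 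The situation for (iii)$\rightarrow$(i) is worse: you correctly observe that the line-graph reduction is invalid (the double-star example is a genuine counterexample to that reduction) and correctly identify the vertex-coupling issue, but then you offer no construction at all --- the ``technical heart'' you point to is precisely Reiterman's theorem, i.e.\ the statement to be proved. So the proposal, as it stands, proves (i)$\rightarrow$(ii) and (i)$\rightarrow$(iii) and leaves both hard implications open.
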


Recently, Arman, Elliott, and R\"odl \cite{arman} extended this problem to countable hypergraphs. Namely, they formulated the notion of an infinite increasing (loose) path on $k$-uniform hypergraphs, along with key results such as Theorems \ref{thm_hyper_vertex} and \ref{thm_hyper}. A comprehensive treatment on this topic and its variants can be found in Elliott's doctoral thesis \cite{elliott}.

\begin{definition}\label{ip}
Let $H$ be a $k$-uniform hypergraph. An \emph{infinite path} in $H$ is a sequence $v_1v_2\dots$ of distinct vertices of $H$, along with its corresponding sequence $e_0e_1\dots$ of edges such that $e_i = \{v_{(k-1)i+1}, v_{(k-1)i+2}, \dots, v_{(k-1)i+k}\}$ for $i \ge 0$.
\end{definition}

\begin{definition}\label{iip}
Let $H = (V, E)$ be a countable $k$-uniform hypergraph.
\begin{enumerate}
    \item Given a vertex labeling $\phi\colon V \to \N$, an \emph{infinite skip-increasing path} in $H$ is an infinite path $v_1v_2\dots$ with $\phi(v_{(k-1)i+1}) < \phi(v_{(k-1)i+k})$ for $i \ge 0$. It is an \emph{infinite increasing path} if $\phi(v_j) < \phi(v_{j+1})$ for $j \ge 1$.
    \item Given an edge labeling $\phi\colon E \to \N$, an \emph{infinite increasing path} in $H$ is an infinite path $e_0e_1\dots$ with $\phi(e_i) < \phi(e_{i+1})$ for $i \ge 0$.
\end{enumerate}
\end{definition}

The class of properties provided below generalizes condition (i) of Theorem \ref{thm_graph} to $k$-uniform hypergraphs.

\begin{definition}
A $k$-uniform hypergraph $H = (V, E)$ has property $\Pp_\ell$, $\ell \le k$, if there exists a nonempty set $V' \subseteq V$ such that for every $v \in V'$, we have
\[|\{e \in E: v \in e \text{ and } |V' \cap e| \ge \ell\}| = \infty.\]
\end{definition}

Note that $\Pp_\ell$ becomes more restrictive as $\ell$ gets larger. The cases $\ell = 2$ and $\ell = k$ are highly relevant to our problem, as the following theorems indicate.

\begin{theorem}[Arman, Elliott, and R\"odl \cite{arman}]\label{thm_hyper_vertex}
Let $H = (V, E)$ be a countable linear $k$-uniform hypergraph. The following are equivalent:
\begin{enumerate}[\normalfont(i)]
    \item $H$ has property $\Pp_k$.
    \item For any vertex labeling $\phi\colon V \to \N$, there exists an infinite increasing path in $H$.
\end{enumerate}
\end{theorem}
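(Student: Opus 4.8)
The plan is to prove the two implications separately; linearity of $H$ enters decisively in both, though in different guises. Throughout I take the labelings to be injective, as in the bijective convention of the introduction, so that each initial segment $\{v : \phi(v) \le n\}$ is finite; this finiteness is what the argument for (i) $\Rightarrow$ (ii) needs.

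\emph{Direction (i) $\Rightarrow$ (ii), by a greedy construction.} Assume $\Pp_k$ and fix a witnessing nonempty $V' \subseteq V$ and an arbitrary labeling $\phi$. Every $v \in V'$ lies in infinitely many edges contained in $V'$. I would build the path one edge at a time, maintaining a current junction $w \in V'$ and the finite set $U$ of vertices already used, extending by choosing an edge $e \ni w$ with $e \subseteq V'$ whose other $k-1$ vertices avoid $U$ and all carry label exceeding $\phi(w)$. Sorting $e \setminus \{w\}$ by label and declaring its largest vertex the new junction keeps the path strictly increasing with distinct vertices, matching Definitions \ref{ip} and \ref{iip}. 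The one point needing justification is that such an $e$ exists, and this is exactly where linearity is used: since any two edges through $w$ meet only in $w$, the $(k-1)$-sets $e \setminus \{w\}$ are pairwise disjoint, so at most $|U|$ of them can meet $U$, while each of the finitely many vertices of label at most $\phi(w)$ lies in at most one edge together with $w$. Hence only finitely many of the infinitely many $V'$-internal edges through $w$ are inadmissible, and iterating yields the desired infinite increasing path.

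\emph{Direction (ii) $\Rightarrow$ (i), by contraposition.} Suppose $H$ lacks $\Pp_k$; I must exhibit a single labeling admitting no infinite increasing path. I would run the transfinite peeling $V_0 = V$, $V_{\beta+1} = \{v \in V_\beta : v \text{ lies in infinitely many edges } e \subseteq V_\beta\}$, with $V_\lambda = \bigcap_{\beta < \lambda} V_\beta$ at limits. Failure of $\Pp_k$ says precisely that this process empties $V$, so every vertex acquires a rank $\alpha(v)$, the stage at which it is discarded, and $V_\beta = \{v : \alpha(v) \ge \beta\}$. By construction each $v$ lies in only finitely many edges $e \subseteq V_{\alpha(v)}$, i.e.\ finitely many edges all of whose vertices have rank at least $\alpha(v)$. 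Fixing a rank $\rho$, it follows that the slice made of the rank-$\rho$ vertices together with the edges all of whose vertices have rank $\rho$ is \emph{locally finite}: each vertex meets only finitely many such edges.

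\emph{Assembling the labeling, and the main obstacle.} The strategy is then to label so that an increasing path can only decrease rank, hence (by well-foundedness of the ordinals) changes rank only finitely often and eventually lives inside a single locally finite slice; and to label each slice, exploiting that spheres of a locally finite structure are finite, by a ``cut level'' scheme that inserts infinitely many finite layers across which all labels drop, so that every infinite path inside the slice must descend infinitely often and therefore cannot be increasing. I expect the genuinely hard part to be this construction: one cannot simply make labels monotone in rank, because the rank classes are infinite, so realizing the transfinite rank data together with the within-slice descents inside one bijection $V \to \N$ of order type $\omega$ requires a delicate stage-by-stage interleaving that simultaneously respects the well-founded rank order and installs the descending cut layers in each slice. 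Verifying that the resulting labeling blocks every infinite increasing path—tracking the junction vertices of a hypothetical path and combining rank-descent with the slice argument—is where the real work lies.
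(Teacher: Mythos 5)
First, a remark on the comparison itself: the paper never proves this statement—it is quoted from Arman, Elliott, and R\"odl \cite{arman}—so your proposal can only be assessed on its own merits, not against an in-paper argument.

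Your direction (i) $\Rightarrow$ (ii) is correct and essentially complete. Since $H$ is $k$-uniform, property $\Pp_k$ does say that each $v\in V'$ lies in infinitely many edges $e\subseteq V'$, and your greedy step is justified exactly where you say it is: by linearity the sets $e\setminus\{w\}$, over edges $e\ni w$ with $e\subseteq V'$, are pairwise disjoint, so at most $|U|$ of them meet the set $U$ of used vertices, and each of the finitely many vertices with label at most $\phi(w)$ (finiteness uses injectivity of $\phi$, consistent with the paper's bijective convention) forbids at most one further edge through $w$. Hence an admissible edge always exists, and sorting it by label extends the increasing path with a new junction in $V'$.

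The genuine gap is in (ii) $\Rightarrow$ (i). Your peeling is set up correctly ($\neg\Pp_k$ holds iff the transfinite peeling empties $V$, and each rank slice is locally finite), but the entire content of this direction is the construction of one bijection $\phi\colon V\to\N$ defeating every infinite path, and you do not construct it—you explicitly defer it as ``where the real work lies.'' Worse, the intermediate target you state, namely to ``label so that an increasing path can only decrease rank,'' is not merely delicate but provably unachievable in general. Stepwise it demands: whenever $u,v$ share an edge and $\alpha(u)<\alpha(v)$, then $\phi(u)>\phi(v)$ (otherwise a path could take an increasing step up in rank). Now take $k=2$ and the tree built from a spine $x_1,x_2,x_3,\dots$, where $x_n$ has level $n$, every vertex of level $\ell+1$ has infinitely many children of level $\ell$ (with $x_{\ell}$ among the children of $x_{\ell+1}$), and level-$0$ vertices are leaves. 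One checks that the peeling removes exactly the level-$\ell$ vertices at stage $\ell+1$, so the core is empty, $\neg\Pp_2$ holds, and $\alpha$ equals the level. But then the constraint forces $\phi(x_1)>\phi(x_2)>\phi(x_3)>\cdots$, an infinite strictly descending sequence in $\N$, which is impossible; the same obstruction persists for every $k$ after fattening the edges. So in such hypergraphs every admissible labeling must permit some label-increasing, rank-increasing steps, and the actual proof has to show that these unavoidable steps can never be chained into an infinite increasing path (while also installing the within-slice descents). That argument—not the verification you describe as routine bookkeeping—is the heart of the theorem, and it is missing; as it stands, your second direction is a plan whose stated organizing principle fails, not a proof.
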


\begin{theorem}[Arman, Elliott, and R\"odl \cite{arman}]\label{thm_hyper}
Let $H = (V, E)$ be a countable linear $k$-uniform hypergraph. Consider the following statements:
\begin{enumerate}[\normalfont(i)]
    \item $H$ has property $\Pp_2$.
    \item For any vertex labeling $\phi\colon V \to \N$, there exists an infinite skip-increasing path in $H$.
    \item For any edge labeling $\phi\colon E \to \N$, there exists an infinite increasing path in $H$.
\end{enumerate}
We have \normalfont{(ii) $\leftrightarrow$ (i) $\rightarrow$ (iii)}.
\end{theorem}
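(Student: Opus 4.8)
The plan is to handle the three required implications separately: I would obtain $(\mathrm{i})\to(\mathrm{ii})$ and $(\mathrm{i})\to(\mathrm{iii})$ by direct greedy constructions, and reduce $(\mathrm{ii})\to(\mathrm{i})$ to the graph case of Theorem~\ref{thm_graph} through an auxiliary graph. Throughout, let $\Gamma$ denote the \emph{shadow} of $H$: the countable simple graph on $V$ in which $u$ and $w$ are adjacent exactly when some edge of $H$ contains both. The role of $\Gamma$ is that the link vertices $v_{(k-1)i+1}$ of any infinite path in $H$ trace out a walk in $\Gamma$, so that conditions on paths in $H$ can be read off from paths in $\Gamma$.

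For $(\mathrm{i})\to(\mathrm{ii})$, I would fix a witnessing set $V'$ for $\Pp_2$ and an arbitrary vertex labeling $\phi$, and build a skip-increasing path greedily. Suppose the path has been constructed up to a link vertex $u \in V'$, using a finite vertex set $U$. Since $u$ lies in infinitely many edges $e$ with $|V' \cap e| \ge 2$, and by linearity any two such edges meet only in $u$, at most finitely many of them can meet $U \setminus \{u\}$; each of the infinitely many remaining edges supplies a fresh $V'$-vertex $w \neq u$ outside $U$, and these $w$ are pairwise distinct. As $\phi$ is injective, infinitely many satisfy $\phi(w) > \phi(u)$, so I may pick such a $w$, list its edge with $u$ first and $w$ last, and continue. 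The argument for $(\mathrm{i})\to(\mathrm{iii})$ is identical in outline: one instead remembers the label $\phi(e)$ of the arriving edge and, using that the edge labeling assigns distinct labels to the infinitely many available edges through $u$, selects the next edge to have label exceeding $\phi(e)$.

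For $(\mathrm{ii})\to(\mathrm{i})$ I would argue the contrapositive: assuming $H$ fails $\Pp_2$, produce a vertex labeling with no infinite skip-increasing path. The key observation is that $\Pp_2$ for $H$ is equivalent to $\Gamma$ containing a subgraph in which every vertex has infinite degree. Indeed, if $V'$ induces such a subgraph of $\Gamma$, then each $v \in V'$ has infinitely many $\Gamma$-neighbors in $V'$, and grouping those neighbors according to an edge of $H$ realizing each adjacency (each edge accounting for at most $k-1$ of them) shows that $v$ lies in infinitely many edges $e$ with $|V' \cap e| \ge 2$; the converse direction uses linearity exactly as in the previous paragraph. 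Hence $\neg\Pp_2$ gives that $\Gamma$ has no subgraph of all-infinite-degree, so the equivalence $(\mathrm{i})\leftrightarrow(\mathrm{ii})$ of Theorem~\ref{thm_graph}, applied to the countable graph $\Gamma$, furnishes a labeling $\phi\colon V \to \N$ with no infinite increasing path in $\Gamma$. Finally, any infinite skip-increasing path in $H$ under $\phi$ would have link vertices of strictly increasing $\phi$-value, with consecutive links co-contained in an edge and hence $\Gamma$-adjacent; these links would form an infinite increasing path in $\Gamma$, a contradiction. Thus $\phi$ witnesses the failure of $(\mathrm{ii})$.

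The most error-prone point is ensuring that the greedy extensions in $(\mathrm{i})\to(\mathrm{ii})$ and $(\mathrm{i})\to(\mathrm{iii})$ keep the path \emph{loose}, i.e.\ that each newly added edge meets the previously used vertices only in the current link; this is precisely where linearity is indispensable, since it forces the infinitely many candidate edges through $u$ to be internally disjoint, leaving infinitely many untouched by the finite set $U$. In the reduction for $(\mathrm{ii})\to(\mathrm{i})$, the subtlety to keep in view is that only the single implication ``skip-increasing path in $H$ $\Rightarrow$ increasing path in $\Gamma$'' is needed and true; its converse generally fails (a $\Gamma$-path need not lift to a loose path in $H$), but it is never invoked.
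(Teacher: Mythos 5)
This statement is one the paper quotes from Arman, Elliott, and R{\"o}dl \cite{arman} and does not prove itself, so there is no internal proof to compare against; your argument must therefore stand on its own, and it does --- it is correct. Your two greedy constructions for (i) $\rightarrow$ (ii) and (i) $\rightarrow$ (iii) are sound: linearity forces the infinitely many edges through the current link $u$ witnessing $\Pp_2$ to be pairwise disjoint outside $u$, so the finitely many already-used vertices spoil only finitely many of them, and injectivity of the labeling then yields an extension whose new link vertex (resp.\ new edge) has a larger label, keeping the path loose as required by Definition \ref{ip}. Your reduction of (ii) $\rightarrow$ (i) to Theorem \ref{thm_graph} via the shadow graph $\Gamma$ is also correct, and you are careful about the two points where it could go wrong: the equivalence of $\Pp_2$ with the existence of an all-infinite-degree subgraph of $\Gamma$ needs $k$-uniformity (each edge realizes at most $k-1$ adjacencies at a vertex) in one direction and linearity in the other, and only the true implication ``skip-increasing path in $H$ gives an increasing path in $\Gamma$'' is invoked, which holds because consecutive link vertices are co-contained in an edge and carry strictly increasing labels. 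It is worth observing that your shadow-graph reduction is very much in the spirit of the paper's own main technique: Theorem \ref{thm_2} reduces the edge-labeling problem on $H$ to a vertex-labeling problem on a skeleton of the dual $H^*$ precisely so that Theorem \ref{thm_graph} can be applied, whereas you perform an analogous graph reduction on the primal side, where no finiteness hypothesis on $\beta$-cycles is needed because the skip-increasing (rather than fully increasing) condition tolerates the coarser adjacency structure of $\Gamma$.
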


Theorem \ref{thm_hyper} partially extends Theorem \ref{thm_graph} to the hypergraph setting. The critical missing piece to complete this extension is the implication (iii) $\rightarrow$ (i), which is left open in \cite{arman}.

\begin{definition}
A \emph{Berge cycle} in a hypergraph $H$ is a sequence $e_1v_1e_2v_2\dots e_nv_ne_1$, $n \ge 3$, of alternating edges and vertices of $H$ such that:
\begin{enumerate}[\normalfont(i)]
    \item $v_1, \dots, v_n$ are distinct vertices, and $e_1, \dots, e_n$ are distinct edges.
    \item For $1 \le i \le n$, the vertex $v_i$ belongs to $e_i$ and $e_{i+1}$ (we set $e_{n+1} = e_1$).
\end{enumerate}
\end{definition}

In this note, we show that the implication (iii) $\rightarrow$ (i) of Theorem \ref{thm_hyper} holds whenever $H$ contains only finitely many distinct Berge cycles. In fact, we prove a slightly stronger result: if for all $v \in V(H)$ there exist only finitely many distinct Berge cycles involving $v$, then (iii) $\rightarrow$ (i) of Theorem \ref{thm_hyper} holds. This marks significant progress in extending the combined work of M\"uller, Reiterman, and R\"odl. The main idea of the proof is to first reduce the edge labeling problem on $H$ to a vertex labeling problem on its dual $H^*$. The problem is further reduced to a graph labeling problem using certain graphs introduced in Definition \ref{skeleton}. This reduction is done so that Theorem \ref{thm_graph} can successfully be applied to our problem.

\section{Hypergraph labeling duality}

To motivate our approach in tackling the problem, we show that infinite increasing paths involving a vertex and edge labeling, respectively, are dual concepts in the hypergraph setting.

Given distinct vertices $v_1, \dots, v_n$ and edges $e_1, \dots, e_n$ in a hypergraph $H$, the sequence $v_1e_1v_2e_2\dots v_ne_n$ is a \emph{Berge path} if for $1 \le i \le n$, the vertex $v_i$ belongs to $e_{i-1}$ and $e_i$ (we set $e_0 = e_1$). We can define a Berge path in the form $v_1e_1\dots v_ne_nv_{n+1}$ in a similar manner so that $v_{n+1}$ belongs to $e_n$ in particular. The corresponding definition for infinite Berge paths is as follows.

\begin{definition}\label{beta-path}
An \emph{infinite Berge path} in a hypergraph $H$ is a sequence $v_1e_1v_2e_2\dots$ of alternating vertices and edges of $H$ such that:
\begin{enumerate}[\normalfont(i)]
    \item $v_1, v_2, \dots$ are distinct vertices, and $e_1, e_2, \dots$ are distinct edges.
    \item For $i \ge 1$, the vertex $v_i$ belongs to $e_{i-1}$ and $e_i$ (we set $e_0 = e_1$).
\end{enumerate}
\end{definition}

Unlike infinite paths for $k$-uniform hypergraphs $H$ given in Definition \ref{ip}, infinite Berge paths are defined for any hypergraph $H$. On $k$-uniform hypergraphs, $k \ge 3$, the notion of infinite paths is strictly stronger than infinite Berge paths. When $k = 2$, the two notions coincide.

\begin{definition}
Let $H = (V, E)$ be a countable hypergraph.
\begin{enumerate}
    \item Given a vertex labeling $\phi\colon V \to \N$, an \emph{infinite increasing Berge path} in $H$ is an infinite Berge path $v_1e_1v_2e_2\dots$ with $\phi(v_i) < \phi(v_{i+1})$ for $i \ge 1$.
    \item Given an edge labeling $\phi\colon E \to \N$, an \emph{infinite increasing Berge path} in $H$ is an infinite Berge path $v_1e_1v_2e_2\dots$ with $\phi(e_i) < \phi(e_{i+1})$ for $i \ge 1$.
\end{enumerate}
\end{definition}

Let us recall that the \emph{dual} $H^* = (V^*, E^*)$ of a hypergraph $H = (V, E)$ is defined by
\[V^* = E \quad \text{and} \quad E^* = \{v^*: v \in V\},\]
where $v^* = \{e \in E: v \in e\}$. We note that $(H^*)^* = H$.

\begin{proposition}\label{dual}
Let $H$ be a countable hypergraph. The following are equivalent:
\begin{enumerate}[\normalfont(i)]
    \item For any vertex labeling $\phi\colon V(H) \to \N$, there exists an infinite increasing Berge path in $H$.
    \item For any edge labeling $\phi\colon E(H^*) \to \N$, there exists an infinite increasing Berge path in $H^*$.
\end{enumerate}
\end{proposition}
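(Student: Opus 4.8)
The plan is to exploit the fact that the dual operation identifies the vertices of $H$ with the edges of $H^*$ and vice versa, converting a vertex-increasing $\beta$-path on one side into an edge-increasing $\beta$-path on the other. First I would record the labeling correspondence. Since $V(H^*) = E(H)$ and $E(H^*) = \{v^* : v \in V(H)\}$, and since we regard $v \mapsto v^*$ as a bijection $V(H) \to E(H^*)$ (consistent with the identity $(H^*)^* = H$), every edge labeling $\psi\colon E(H^*) \to \N$ is the same datum as the vertex labeling $\phi\colon V(H) \to \N$ given by $\phi(v) = \psi(v^*)$, and $\psi \leftrightarrow \phi$ is a bijection between the two sets of labelings. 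Hence it suffices to show, for one fixed pair $(\phi, \psi)$ related in this way, that an infinite increasing $\beta$-path in $H$ with respect to $\phi$ exists if and only if an infinite increasing $\beta$-path in $H^*$ with respect to $\psi$ exists; applying this to every corresponding pair then yields (i) $\leftrightarrow$ (ii).

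For the forward implication I would start from an infinite increasing $\beta$-path $v_1 e_1 v_2 e_2 \dots$ in $H$ and form the candidate sequence $e_1 v_2^* e_2 v_3^* e_3 v_4^* \dots$ in $H^*$: the edges $e_i$ of $H$ become the vertices of the $H^*$-path (recall $V(H^*) = E(H)$), while the vertices $v_{i+1}$ become the edges $v_{i+1}^*$. Note the \emph{index shift}: the first vertex $v_1$ is discarded. I would then verify Definition \ref{beta-path} directly. Distinctness is immediate from the injectivity of $v \mapsto v^*$ and of the $e_i$. Writing $w_i = e_i$ and $f_i = v_{i+1}^*$ (with $f_0 = f_1$), the incidence requirement $w_i \in f_{i-1} \cap f_i$ translates to $v_i, v_{i+1} \in e_i$, which holds by hypothesis; and the crucial condition that $w_i = e_i$ lies in no other $f_j = v_{j+1}^*$ translates to $v_m \notin e_i$ for $m \notin \{i, i+1\}$, which is precisely the ``belongs to no other edge'' clause of the $H$-$\beta$-path (the discarded $v_1$ lies only in $e_1$, so it never violates these conditions). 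Finally, $\psi(v_{i+1}^*) = \phi(v_{i+1}) < \phi(v_{i+2}) = \psi(v_{i+2}^*)$ gives the increasing property, so the sequence is an infinite increasing $\beta$-path in $H^*$.

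The reverse implication is the mirror image under $(H^*)^* = H$: beginning with an increasing $\beta$-path $g_1 u_1^* g_2 u_2^* \dots$ in $H^*$ (where $g_i \in E(H)$ and $u_i \in V(H)$), I would discard the leading vertex $g_1$ and read off $u_1 g_2 u_2 g_3 \dots$ as an infinite increasing $\beta$-path in $H$, verifying the same two axioms; the two constructions are mutually inverse up to the dropped endpoint. I expect the main obstacle to be bookkeeping rather than anything conceptual: one must pin down the index shift exactly so that the ``incident to no other edge/vertex'' clauses line up on both sides, and one must justify that $v \mapsto v^*$ is a genuine bijection (not merely a surjection onto distinct stars) so that both the distinctness of the edges $v_i^*$ and the labeling correspondence $\psi \leftrightarrow \phi$ are valid. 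Once the translation of the $\beta$-path axioms is checked carefully in the forward direction, the reverse direction is entirely analogous.
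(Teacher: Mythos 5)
Your proposal is correct and follows essentially the same route as the paper: the paper's entire proof is the single map $v_1e_1v_2e_2v_3\ldots \mapsto e_1v_2^*e_2v_3^*\dots$ (with the first vertex dropped), together with the labeling correspondence $v^* \mapsto \phi(v)$, exactly as you describe. Your verification of the distinctness, incidence, and ``no other edge'' clauses, and your handling of the reverse direction via $(H^*)^* = H$, simply makes explicit the details the paper leaves to the reader.
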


\begin{proof}
The map $v_1e_1v_2e_2v_3\ldots \mapsto e_1v_2^*e_2v_3^*\dots$ transforms an infinite increasing Berge path given a vertex labeling $v \mapsto \phi(v)$ (resp.\ edge labeling $e \mapsto \phi(e)$) in $H$ to an infinite increasing Berge path given the edge labeling $v^* \mapsto \phi(v)$ (resp.\ vertex labeling $e \mapsto \phi(e)$) in $H^*$. A proof of the proposition easily follows.
\end{proof}

We introduce the dual property ${\Pp_2}^*$ of $\Pp_2$ with the following definition. Observe that $H$ has property $\Pp_2$ if and only if $H^*$ has property ${\Pp_2}^*$.

\begin{definition}
A hypergraph $H = (V, E)$ has property ${\Pp_2}^*$ if there exists a nonempty set $E' \subseteq E$ such that for every $e \in E'$, we have
\[|\{v \in e: v \in f \text{ for some edge } f \neq e \text{ in } E'\}| = \infty.\]
\end{definition}

We can now obtain an augmented version of Theorem \ref{thm_graph}. In the following corollary, (i) $\leftrightarrow$ (ii), (iii) $\leftrightarrow$ (iv), and (v) $\leftrightarrow$ (vi) are obtained through duality, whereas (i) $\leftrightarrow$ (iii) $\leftrightarrow$ (v) is due to Theorem \ref{thm_graph}.

\begin{corollary}\label{thm_1}
Let $G$ be a countable graph. The following are equivalent:
\begin{enumerate}[\normalfont(i)]
    \item $G$ has a subgraph $G'$ in which every vertex has infinite degree in $G'$.
    \item $G^*$ has property ${\Pp_2}^*$.
    \item For any vertex labeling $\phi\colon V(G) \to \N$, there exists an infinite increasing path in $G$.
    \item For any edge labeling $\phi\colon E(G^*) \to \N$, there exists an infinite increasing Berge path in $G^*$.
    \item For any edge labeling $\phi\colon E(G) \to \N$, there exists an infinite increasing path in $G$.
    \item For any vertex labeling $\phi\colon V(G^*) \to \N$, there exists an infinite increasing Berge path in $G^*$.
\end{enumerate}
\end{corollary}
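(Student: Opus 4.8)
The plan is to establish the six equivalences by combining Theorem \ref{thm_graph} with the duality furnished by Proposition \ref{dual}, treating the three ``primal'' conditions (i), (iii), (v) and their respective ``dual'' partners (ii), (iv), (vi) separately before linking the two halves. Since the paper already asserts that (i) $\leftrightarrow$ (iii) $\leftrightarrow$ (v) is exactly Theorem \ref{thm_graph} applied to the graph $G$ (a graph being a $2$-uniform hypergraph, so that an infinite path in the sense of Definition \ref{ip} coincides with an ordinary infinite path and the vertex/edge-increasing path notions of Definition \ref{iip} specialize to the classical ones), that chain can simply be cited. The remaining work is to verify the three horizontal links (i) $\leftrightarrow$ (ii), (iii) $\leftrightarrow$ (iv), and (v) $\leftrightarrow$ (vi), each of which is a duality statement.

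For (iii) $\leftrightarrow$ (iv) and (v) $\leftrightarrow$ (vi) I would invoke Proposition \ref{dual} directly. Taking $H = G$ in Proposition \ref{dual}, part (i) of that proposition is precisely condition (iii) here, and part (ii) is precisely condition (iv); this gives (iii) $\leftrightarrow$ (iv) at once. For (v) $\leftrightarrow$ (vi), I would instead take $H = G^*$ in Proposition \ref{dual} and use the identity $(G^*)^* = G$ recorded before the proposition: then part (i) of the proposition (a statement about vertex labelings of $G^*$) becomes condition (vi), and part (ii) (a statement about edge labelings of $(G^*)^* = G$) becomes condition (v). The only subtlety here is bookkeeping with the involution $(\,\cdot\,)^*$, ensuring that ``infinite increasing $\beta$-path'' on $G$ under an edge labeling genuinely matches condition (v)'s ``infinite increasing path'' on $G$; for a graph these agree because every $\beta$-path in a $2$-uniform hypergraph is just an ordinary path, so I would note this identification explicitly.

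For (i) $\leftrightarrow$ (ii) I would appeal to the remark immediately preceding Definition \ref{beta-path}'s dual, namely that $G$ has property $\Pp_2$ if and only if $G^*$ has property ${\Pp_2}^*$. Combined with the observation that for a graph, property $\Pp_2$ (with $\ell = k = 2$) asserts exactly the existence of a nonempty vertex set $V'$ all of whose vertices meet infinitely many edges lying within $V'$ --- equivalently, the existence of a subgraph $G'$ in which every vertex has infinite degree --- condition (i) is equivalent to ``$G$ has property $\Pp_2$,'' which by the stated equivalence is ``$G^*$ has property ${\Pp_2}^*$,'' i.e.\ condition (ii). I would make the translation between ``subgraph with all infinite degrees'' and ``$\Pp_2$'' precise, since it is the one place where the graph-theoretic phrasing of (i) must be reconciled with the hypergraph property.

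The main obstacle, such as it is, lies not in any deep argument but in the careful matching of definitions across the self-dual involution: one must confirm that each $\beta$-path notion collapses to its ordinary graph counterpart when $k = 2$, that the labelings transported by Proposition \ref{dual} are the intended ones, and that applying that proposition with $H = G^*$ correctly produces statements about $G$ itself. Once these identifications are in place, the corollary follows formally by stringing together the cited chain (i) $\leftrightarrow$ (iii) $\leftrightarrow$ (v) with the three duality bridges, and I would close by remarking that the equivalences fit together into the single cycle asserted.
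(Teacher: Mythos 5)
Your proposal is correct and takes essentially the same route as the paper: the paper likewise cites Theorem \ref{thm_graph} for (i) $\leftrightarrow$ (iii) $\leftrightarrow$ (v) and obtains the three bridges (i) $\leftrightarrow$ (ii), (iii) $\leftrightarrow$ (iv), and (v) $\leftrightarrow$ (vi) through duality, i.e.\ the observation that $G$ has property $\Pp_2$ if and only if $G^*$ has property ${\Pp_2}^*$ together with Proposition \ref{dual}. Your explicit bookkeeping --- checking that $\beta$-paths collapse to ordinary paths when $k = 2$, translating condition (i) into property $\Pp_2$, and applying Proposition \ref{dual} with $H = G^*$ and $(G^*)^* = G$ --- merely spells out what the paper leaves implicit.
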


Observe that Corollary \ref{thm_1} effectively extends the original equivalence (i) $\leftrightarrow$ (iii) $\leftrightarrow$ (v) for graphs $G$ to linear 2-regular hypergraphs $G^*$ with the equivalence (ii) $\leftrightarrow$ (iv) $\leftrightarrow$ (vi).

\section{Main result}

Theorem \ref{thm_2} contains the main contribution of this note. First, the following definition introduces a family of graphs that can be obtained from a hypergraph $H$. Figure \ref{fig} illustrates the concept.

\begin{definition}\label{skeleton}
Let $H = (V, E)$ be a hypergraph and $S = \{(v, e) \in V \times E: v \in e\}$. Suppose that $T$ is a subset of $S$ such that for every $f \in E$, the set $T_f = \{(v, e) \in T: e = f\}$ is finite. The \emph{skeleton of $H$ generated by $T$} is a graph $G$ defined by
\[V(G) = V \quad \text{and} \quad E(G) = \bigcup_{(v,e) \in T} \{\{v, w\}: w \in e \text{ and } w \neq v\}.\]
\end{definition}

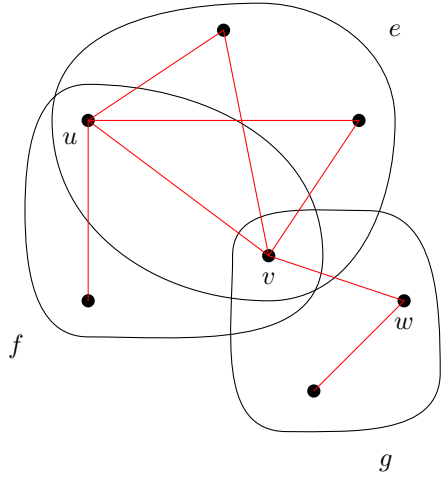
\begin{figure}
\centering
\begin{tikzpicture}[x=12mm,y=12mm]
    \node (v1) at (0,2) {};
    \node (v2) at (1.5,3) {};
    \node (v3) at (3,2) {};
    \node (v4) at (0,0) {};
    \node (v5) at (2,0.5) {};
    \node (v6) at (3.5,0) {};
    \node (v7) at (2.5,-1) {};

    \begin{scope}[fill opacity=0.8]
    \filldraw[fill=none] ($(v1)+(-0.4,0)$) 
        to[out=90,in=180] ($(v2) + (0.4,0.3)$)
        to[out=0,in=90] ($(v3) + (0.4,0)$)
        to[out=270,in=0] ($(v5) + (0,-0.5)$)
        to[out=180,in=270] ($(v1)+(-0.4,0)$);
    \filldraw[fill=none] ($(v4)+(-0.7,1)$)
        to[out=90,in=180] ($(v1)+(0,0.4)$)
        to[out=0,in=90] ($(v5)+(0.6,0)$)
        to[out=270,in=0] ($(v4)+(0,-0.4)$)
        to[out=180,in=270] ($(v4)+(-0.7,1)$);
    \filldraw[fill=none] ($(v5)+(-0.4,0)$)
        to[out=90,in=180] ($(v6)+(-0.4,1)$)
        to[out=0,in=90] ($(v6)+(0.4,-0.8)$)
        to[out=270,in=0] ($(v7)+(-0.3,-0.45)$)
        to[out=180,in=270] ($(v5)+(-0.4,0)$);
    \end{scope}
    
    \foreach \v in {1,2,...,7} {
        \fill (v\v) circle (2.5pt);
    }
    
    \draw[color=red] (2.5,-1) -- (3.5,0) -- (2,0.5) -- (1.5,3) -- (0,2);
    \draw[color=red] (0,0) -- (0,2) -- (2,0.5) -- (3,2) -- (0,2);
    
    \node at (3.4,3) {$e$};
    \node at (3.3,-1.8) {$g$};
    \node at (-0.8,-0.5) {$f$};
    \node at (-0.2,1.8) {$u$};
    \node at (2,0.25) {$v$};
    \node at (3.5,-0.25) {$w$};
\end{tikzpicture}

\caption{A hypergraph pictured along with a skeleton generated by $T = \{(u, e), (v, e), (u, f), (w, g)\}$; the skeleton is indicated by the red edges.}
\label{fig}
\end{figure}

\begin{theorem}\label{thm_2}
Let $H$ be a countable linear $k$-uniform hypergraph such that for all $v \in V(H)$, there exist only finitely many distinct Berge cycles involving $v$. The following statements are equivalent:
\begin{enumerate}[\normalfont(i)]
    \item $H$ has property $\Pp_2$.
    \item For any edge labeling $\phi\colon E(H) \to \N$, there exists an infinite increasing path in $H$.
    \item For any vertex labeling $\phi\colon V(H^*) \to \N$, there exists an infinite increasing Berge path in $H^*$.
    \item There exists a skeleton $G$ of $H^*$ such that for any vertex labeling $\phi\colon V(G) \to \N$, there exists an infinite increasing path in $G$.
    \item There exists a skeleton $G$ of $H^*$ that has a subgraph $G'$ in which every vertex has infinite degree.
\end{enumerate}
\end{theorem}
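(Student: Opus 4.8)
\section*{Proof proposal}

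The plan is to prove the five conditions equivalent by establishing the cycle $(i)\Rightarrow(ii)\Rightarrow(iii)\Rightarrow(iv)\Rightarrow(v)\Rightarrow(i)$, so that the genuinely new content is exactly the passage from the edge-labeling side to property $\Pp_2$, routed through the dual and a skeleton as the introduction advertises. Three of these links are essentially free. The implication $(i)\Rightarrow(ii)$ is the known direction $\Pp_2\Rightarrow(\text{edge-labeling paths})$ of Theorem \ref{thm_hyper}. For $(ii)\Rightarrow(iii)$ I would use duality: since $V(H^*)=E(H)$, a vertex labeling of $H^*$ \emph{is} an edge labeling of $H$, and an infinite increasing path $e_0e_1\dots$ of $H$ (a loose path, hence a $\beta$-path) maps under the correspondence of Proposition \ref{dual} applied to $H^*$ to an infinite increasing $\beta$-path $e_0\,v_1^*\,e_1\,v_2^*\dots$ of $H^*$, where $v_i$ is the vertex shared by $e_{i-1}$ and $e_i$; linearity guarantees uniqueness of that vertex and the $\beta$-path conditions. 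Finally $(iv)\Rightarrow(v)$ is Theorem \ref{thm_graph} applied to the graph $G$ itself.

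The implication $(v)\Rightarrow(i)$ is a short counting argument I can give in full, and it is where the skeleton finiteness condition is used. Let $G$ be a skeleton generated by $T$ and let $G'$ be a nonempty subgraph with all degrees infinite; put $\mathcal{W}=V(G')\subseteq E(H)$. Since $y\in\mathcal{W}$ is an edge of $H$ with only $k$ vertices, it has at most $\sum_{t\in y}|T_{t^*}|<\infty$ neighbours in $G$ arising as a leaf of another vertex's star, so its infinitely many $G'$-neighbours must come from a single marked incidence $(y,\tau(y)^*)\in T$ with $\tau(y)\in y$, and then infinitely many edges of $\mathcal{W}$ contain $\tau(y)$. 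Set $V'=\{\tau(y):y\in\mathcal{W}\}$, which is nonempty. For $v=\tau(y)\in V'$ there are infinitely many $z\in\mathcal{W}$ with $v\in z$; crucially only finitely many of them can satisfy $\tau(z)=v$, because $\tau(z)=v$ forces $(z,v^*)\in T$ while $T_{v^*}$ is finite. Hence infinitely many such $z$ have $\tau(z)\neq v$, giving $\{v,\tau(z)\}\subseteq V'\cap z$, so $v$ lies in infinitely many edges meeting $V'$ at least twice and $V'$ witnesses $\Pp_2$.

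The heart of the proof, and the step I expect to be hardest, is $(iii)\Rightarrow(iv)$: from the guarantee that every vertex labeling of $H^*$ admits an infinite increasing $\beta$-path I must produce one skeleton $G$ of $H^*$ whose vertex-labeling path property holds. My plan is to build $G$ so that increasing $\beta$-paths of $H^*$ can be traced as increasing paths of $G$, and this is where the $\beta$-cycle hypothesis enters structurally. Since each vertex of $H$ lies in finitely many $\beta$-cycles, each edge does too, and for each vertex $v$ the incidences lying on a $\beta$-cycle through $v$ form a finite set $B_v$. Working in the bipartite incidence graph of $H$ — where every edge-node has finite degree $k$ and only vertex-nodes can have infinite degree — I would mark, for each vertex-node $v$, its parent incidence in a spanning-forest backbone of the $\beta$-acyclic remainder together with the finitely many cyclic incidences $B_v$. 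This keeps $T_f$ finite for every edge $f=v^*$, so $G$ is a legitimate skeleton, and it simultaneously captures the acyclic turns of every $\beta$-path and all the (finitely many per vertex) cyclic ones.

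The main obstacle is to verify that this $G$ is good. An increasing $\beta$-path of $H^*$ becomes a walk in the incidence forest whose consecutive edge-nodes lie at tree-distance two through their connecting vertex; when that vertex has finite degree the step is realised by an edge of $G$, but when it has infinite degree the two edges may only be \emph{siblings} joined through a common hub, producing a length-two detour rather than a $G$-edge. No finite marking can make an infinite star into a clique, so these detours are unavoidable in general, and the difficulty is to preserve monotonicity of the labeling across them. I would attempt to resolve this by not tracing the given $\beta$-path directly but instead applying hypothesis $(iii)$ to a suitably modified labeling of $H^*$ that absorbs the hub labels, converting a guaranteed increasing $\beta$-path for the modified labeling into a genuine increasing path of $G$ for the original one. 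Reconciling this rerouting with the finite-marking constraint — so that the hubs used never force infinitely many marks on a single edge $v^*$ — is precisely the point at which the finiteness of $\beta$-cycles through each vertex is indispensable.
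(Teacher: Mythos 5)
Your overall architecture coincides with what is needed: the implication cycle (i)$\rightarrow$(ii)$\rightarrow$(iii)$\rightarrow$(iv)$\rightarrow$(v)$\rightarrow$(i), with Theorem \ref{thm_hyper} giving (i)$\rightarrow$(ii), duality (Proposition \ref{dual}) giving (ii)$\rightarrow$(iii), and Theorem \ref{thm_graph} giving (iv)$\rightarrow$(v). Your (v)$\rightarrow$(i) argument is correct and complete: the leaf-count bound $\sum_{t\in y}|T_{t^*}|<\infty$, the pigeonhole over the at most $k$ incidences $(y,t^*)$ to extract $\tau(y)$, and the observation that $\tau(z)=v$ forces $(z,v^*)\in T_{v^*}$ (finite) together make $V'=\{\tau(y):y\in V(G')\}$ a genuine witness of $\Pp_2$. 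This is a slightly different packaging from the paper, which instead verifies the dual property ${\Pp_2}^*$ for $H^*$ by a contradiction argument, but both rest on the same two finiteness facts (finiteness of each $T_f$ and $k$-regularity of $H^*$), so this is a presentational variant rather than a new route.

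The genuine gap is (iii)$\rightarrow$(iv), and it is exactly where you located it. Your skeleton (spanning-forest parent incidences plus the cyclic incidences $B_v$) cannot work as stated: for consecutive vertices $x_i,x_{i+1}$ of an increasing $\beta$-path in $H^*$, linearity forces them to share a unique edge $v^*$, and you need $(x_i,v^*)\in T$ or $(x_{i+1},v^*)\in T$; when $v^*$ is infinite, no marking chosen in advance by local rules around each node can cover all such steps, and your proposed repair (rerouting via a modified labeling that ``absorbs the hub labels'') is a hope, not an argument. The resolution is to abandon local marking and define $T$ \emph{globally by reachability}: fix a vertex $v_C$ in each component $C$ of $H^*$ and set $T=\{(v,e): \text{there exists a $\beta$-path of the form } v_C\dots ve\}$. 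Then, first, each $T_f$ is finite: if infinitely many $v$ admitted $\beta$-paths $v_C\dots vf$, then gluing pairs of them (via a reduction lemma: two $\beta$-paths $u\dots vf$ and $u\dots wf$ with $v\neq w$ in a linear hypergraph yield a $\beta$-cycle $fv\dots wf$; this is Lemma \ref{lem_paths}) would produce infinitely many $\beta$-cycles through $f$, contradicting the hypothesis --- this is the only place the $\beta$-cycle finiteness enters. Second, every infinite increasing $\beta$-path $P=v_1e_1v_2e_2\dots$ eventually has all its incidences in $T$: take a $\beta$-path $P_1$ from $v_C$ to $v_1$, choose $j$ so that every $e_i$ with $i\ge j$ avoids the finitely many vertices of $P_1$ (possible since each vertex of $H^*$ lies in exactly $k$ edges), and reduce the concatenation of $P_1$ with $v_1e_1\dots v_ie_i$ to a $\beta$-path $v_C\dots v_ie_i$; this puts $(v_i,e_i)\in T$, hence $\{v_i,v_{i+1}\}\in E(G)$ for all $i\ge j$, and $v_jv_{j+1}\dots$ is the required increasing path. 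The hub detours you were fighting never arise, because membership in $T$ is decided by existence of a connecting $\beta$-path rather than by the local structure of a forest.
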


From the assumptions on $H$ in Theorem \ref{thm_2}, we gather that its dual $H^*$ is a countable linear $k$-regular hypergraph such that for all $e \in E(H^*)$, there exist only finitely many distinct Berge cycles involving $e$.

\begin{lemma}\label{lem_paths}
Let $H = (V, E)$ be a linear hypergraph with $f \in E$ and $u, v, w \in V$, where $v \neq w$. If $P_1 = u \dots vf$ and $P_2 = u \dots wf$ are Berge paths in $H$, then there exists a Berge cycle in the form $fv \dots wf$ in $H$.
\end{lemma}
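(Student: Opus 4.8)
The plan is to induct on the total length $m+p$, writing $P_1 = v_1e_1\cdots v_me_m$ and $P_2 = w_1g_1\cdots w_pg_p$ with $v_1=w_1=u$, $v_m=v$, $w_p=w$, and $e_m=g_p=f$. Before the induction I would record two facts that come straight from the $\beta$-path condition. Since $v\in f=g_p$, if $v$ occurred on $P_2$ as some $w_j$ then $g_p$ would have to be one of the two edges adjacent to $w_j$, forcing $j=p$ and $v=w$; hence $v$ does not lie on $P_2$, and symmetrically $w$ does not lie on $P_1$. I would also note that a terminal sub-path of a $\beta$-path is again a $\beta$-path, so any ``restart'' from an interior vertex stays within the hypotheses. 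For the inductive step I first reduce to the case where $P_1$ and $P_2$ meet only at $u$: if they share a vertex $z=v_i=w_j$ with $i,j\ge 2$, the terminal sub-paths $v_i\cdots v_mf$ and $w_j\cdots w_pf$ form a strictly shorter instance from the common origin $z$, ending at $v\neq w$ on $f$, and the induction hypothesis returns the desired cycle verbatim.

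So I may assume $P_1$ and $P_2$ are internally disjoint. Concatenating the reverse of $P_1$ with $P_2$ through $u$ yields the closed sequence $C = f\,v_m e_{m-1}v_{m-1}\cdots e_1\,u\,g_1\cdots g_{p-1}w_p\,f$. Internal disjointness makes all listed vertices distinct, and by the $\beta$-property of each individual path, every vertex of $C$ is incident, \emph{among the edges of its own path}, to exactly its two cycle-neighbours. The only way $C$ can fail to be a genuine $\beta$-cycle is therefore through a \emph{cross-incidence}: a vertex of one path lying inside an edge of the other (beyond the forced incidences $u\in e_1\cap g_1$ and $v,w\in f$). Note that a repeated edge $e_a=g_b$ is itself such a cross-incidence, since then $v_{a+1}\in g_b$ with $v_{a+1}\notin V(P_2)$; thus in the absence of cross-incidences the edges of $C$ are also distinct and $C$ is a valid $\beta$-cycle $fv\cdots wf$, finishing this case.

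It remains to exploit a cross-incidence, say $v_c\in g_d$ (the case $w_c\in e_d$ being symmetric), with $d$ chosen maximal so that $v_c$ meets only $g_d$ among $g_d,\dots,g_{p-1},f$. Internal disjointness forces $c\ge 2$. If $c<m$, then $v_c\neq v$, and the pair $v_c\cdots v f$ (along $P_1$) together with $v_c\,g_d\,w_{d+1}\cdots w f$ (entering $P_2$ through $g_d$) is a strictly shorter instance from the common origin $v_c$ ending at $v\neq w$ on $f$; I apply the induction hypothesis. If $c=m$, i.e.\ the endpoint $v$ itself lies on some $g_d$, then linearity of $H$ forbids $d=p-1$: the edges $g_{p-1}$ and $f$ already share $w$, so $v\in g_{p-1}\cap f$ would give them two common vertices. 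Hence $d\le p-2$, and the sequence $f\,v\,g_d\,w_{d+1}\cdots w_p\,f$ is directly a $\beta$-cycle of length at least three of the required form $fv\cdots wf$.

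The hard part will be the bookkeeping just sketched: verifying that a cross-incidence is genuinely the \emph{only} obstruction to $C$ being a $\beta$-cycle, and that each repair produces a \emph{legitimate} smaller instance — two $\beta$-paths sharing an origin and ending at distinct vertices of $f$ — rather than a mere closed walk. Linearity is what does the essential work here: it keeps the terminal configuration an honest cycle (length $\ge 3$) and, together with the $\beta$-condition, confines the stray incidences to the single type treated above. I would also remark that the finiteness hypothesis on $\beta$-cycles through a vertex is not needed for this lemma, which is a purely structural fact about linear hypergraphs.
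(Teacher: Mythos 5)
Your proof is correct, and while it starts from the same object as the paper---the closed sequence obtained by traversing $P_1$ backward and $P_2$ forward through $u$---it extracts the $\beta$-cycle by a genuinely different mechanism. The paper does it in a single pass with a greedy rule: setting $f_1 = f$, $v_1 = v$, it repeatedly jumps from the current vertex to the \emph{last} edge of the concatenated sequence containing it, then to the \emph{last} vertex of the sequence lying in that edge, stopping at $w$; linearity then forces the resulting cycle to have at least three edges. You instead induct on the total length, explicitly classifying the obstructions (a shared internal vertex, a cross-incidence $v_c \in g_d$ with $c < m$, a terminal cross-incidence $v \in g_d$) and recursing on strictly shorter legitimate instances; your terminal case is where linearity enters, which parallels exactly the paper's ``$m \ge 3$'' step. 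What the paper's greedy buys is brevity, at the cost of leaving the verification that its output is a genuine $\beta$-cycle (distinct vertices, distinct edges, no stray incidences) to the reader; your induction is longer but localizes each verification, and your observation that the finiteness hypothesis on $\beta$-cycles is not needed here is accurate---the lemma is indeed a purely structural fact about linear hypergraphs. Two small points to tidy up: in the cross-incidence-free case you should note that $m, p \ge 2$ (if $m = 1$ then $v = u \in f$, which contradicts the $\beta$-condition on $P_2$ when $v \neq w$), so the cycle $C$ automatically has at least three edges; and in the terminal case $c = m$ the phrase ``meets only $g_d$ among $g_d, \dots, g_{p-1}, f$'' should exempt $f$, since $v \in f$ is a forced incidence---what you actually use is maximality of $d$ among indices with $v \in g_d$. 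Neither affects the argument.
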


\begin{proof}
Let $fv \dots u \dots wf$ be an alternating sequence of edges and vertices obtained by traversing through $P_1$ backward followed by $P_2$ forward. Remove the final $f$, and label the resulting sequence as follows:
\[fv \dots u \dots w = e_1u_1 \dots e_nu_n.\]
We will reduce the sequence $e_1u_1 \dots e_nu_n$ to a Berge path in the form $f_1v_1\dots f_mv_m$. Set $f_1 = f$ and $v_1 = v$. For $i \ge 2$, recursively define
\[f_i = e_{\max\{j: v_{i-1} \in e_j\}} \quad \text{and} \quad v_i = u_{\max\{j: u_j \in f_i\}}\]
until we obtain $v_m = w$ for some $m$, which is guaranteed to occur eventually. After suffixing the previously removed final $f$, we can show that $f_1v_1\dots f_mv_mf = fv \dots wf$ is the desired Berge cycle by showing that $m \ge 3$. Indeed, we cannot have a Berge cycle in the form $fvewf$ since $H$ is linear. This completes the proof.
\end{proof}

Now we are ready to prove our main result.

\begin{proof}[Proof of Theorem \ref{thm_2}]
(i) $\rightarrow$ (ii): This is precisely the implication (i) $\rightarrow$ (iii) of Theorem \ref{thm_hyper}.

(ii) $\rightarrow$ (iii): From (ii), it follows that there exists an infinite increasing Berge path in $H$ given any edge labeling $\phi\colon E(H) \to \N$. This condition is equivalent to (iii) by Proposition \ref{dual}.

(iii) $\rightarrow$ (iv): Let $S = \{(v, e) \in V(H^*) \times E(H^*): v \in e\}$. For each component $C$ of $H^*$, take an arbitrary vertex $v_C \in V(C)$. Now set
\[T = \{(v, e) \in S: \text{there exists a Berge path in the form } v_C \dots ve\}.\]
We claim that $T$ generates a skeleton. Assume to the contrary that there exists an $f \in E(H^*)$ such that $T_f = \{(v, e) \in T: e = f\}$ is an infinite set. If follows that there are infinitely many distinct vertices $v$ such that a Berge path in the form $v_C \dots vf$ exists, where $C$ is such that $f \in E(C)$. Applying Lemma \ref{lem_paths} to pairs of such Berge paths, we obtain infinitely many distinct Berge cycles in $H^*$ involving $f$, contradicting our assumption.

Let $G$ denote the skeleton generated by $T$, and suppose that $\phi\colon V(G) \to \N$ is a vertex labeling of $G$. We show that there exists an infinite increasing path in $G$ given $\phi$. By assumption, there exists an infinite increasing Berge path $P = v_1e_1v_2e_2\dots$ in $H^*$. Suppose that $P$ lies in some component $C$, and take a Berge path in the form $P_1 = v_C \dots v_1$. Now pick an integer $j$ such that the infinite subpath $v_je_jv_{j+1}e_{j+1} \dots$ of $P$ has no edge in common with $P_1$. Fix $i \ge j$, and let $P_2 = v_1e_1 \dots v_ie_i$ be a finite subpath of $P$. From $P_1$ and $P_2$ in succession, we obtain an alternating sequence $v_C \dots v_1e_1 \dots v_ie_i$ that can be reduced in a similar manner as in the proof of Lemma \ref{lem_paths} to a Berge path in the form $v_C \dots v_ie_i$. It follows that $(v_i,e_i) \in T$. Since $i$ was arbitrarily fixed, we have $\{v_i, v_{i+1}\} \in E(G)$ for every $i \ge j$. This gives an infinite increasing path $v_jv_{j+1}\dots$ in $G$.

(iv) $\rightarrow$ (v): This is evident from the implication (ii) $\rightarrow$ (i) of Theorem \ref{thm_graph}.

(v) $\rightarrow$ (i): We prove that $H^*$ has property ${\Pp_2}^*$. Let $T$ be the generator of $G$, and define
\[E' = \{e \in E(H^*): |V(G') \cap e| = \infty\}.\]
We first claim that for every $v \in V(G')$, there exists a corresponding edge $e \in E'$ such that $(v, e) \in T$. Fix $v \in V(G')$, and assume to the contrary that $(v, e) \notin T$ for every $e \in E'$. Now set
\[U = N_{G'}(v) \setminus \bigcup_{\substack{f \in E(H^*) \setminus E' \\ v \in f}} f.\]
We have $|U| = \infty$ since $v$ belongs to only finitely many edges in $H^*$ and each $f \notin E'$ contains only finitely many vertices of $G'$. By the definition of $U$ and the assumption that $(v,e) \notin T$ for $e \in E'$, we infer that whenever $(v,f) \in T$, we have $u \notin f$ for every $u \in U$. Also note that $\{u, v\} \in E(G)$ for all $u \in U$. It can thus be shown from Definition \ref{skeleton} that for $u \in U$, there exists an edge $e_u \in E(H^*)$ such that $(u, e_u) \in T$ and $v \in e_u$. Since each $T_{e_u} = \{(v, e) \in T: e = e_u\}$ is a finite set, we have $|\{e_u: u \in U\}| = \infty$. This implies that $\deg_{H^*}(v) = \infty$, which is a contradiction.

From the previous claim, it is clear that $E'$ is nonempty. Now fix $e \in E'$. Pick infinitely many distinct vertices $v \in V(G') \cap e$, and choose for each vertex a corresponding edge $f \in E'$ with $(v, f) \in T$. Since $T_e$ is finite, only finitely many of the chosen vertices have $e$ as their corresponding edge. The rest of the vertices have an edge other than $e$ as a corresponding edge, so
\[|\{v \in e: v \in f \text{ for some edge } f \neq e \text{ in } E'\}| = \infty\]
as desired.
\end{proof}

\section{Final remarks}

Concerning statements (i), (ii), and (iii) of Theorem \ref{thm_hyper}, it is considerably more difficult to verify implication (iii) $\rightarrow$ (i) than to prove (ii) $\rightarrow$ (i). While Theorem \ref{thm_2} only yields partial results, we conjecture that (iii) $\rightarrow$ (i) holds in its full generality. If this is the case, then the equivalence of (i), (ii), and (iii) follows.

\section*{Acknowledgement}

The author would like to thank Bradley Elliott for providing valuable directions regarding the problem at the beginning of this study.

\bibliographystyle{amsplain}
\bibliography{main}

\providecommand{\bysame}{\leavevmode\hbox to3em{\hrulefill}\thinspace}
\providecommand{\MR}{\relax\ifhmode\unskip\space\fi MR }
\providecommand{\MRhref}[2]{%
  \href{http://www.ams.org/mathscinet-getitem?mr=#1}{#2}
}
\providecommand{\href}[2]{#2}
\begin{thebibliography}{1}

\bibitem{arman}
Andrii Arman, Bradley Elliott, and Vojt{\v{e}}ch R{\"o}dl, \emph{Increasing
  paths in countable graphs}, J. Combin. Theory Ser. A \textbf{183} (2021),
  105491.

\bibitem{elliott}
Bradley Elliott, \emph{Increasing paths in edge-ordered hypergraphs}, Ph.D.
  thesis, Emory University, 2020.

\bibitem{muller}
Vladim{\'\i}r M{\"u}ller and Vojt{\v{e}}ch R{\"o}dl, \emph{Monotone paths in
  ordered graphs}, Combinatorica \textbf{2} (1982), no.~2, 193--201.

\bibitem{reiterman}
Jan Reiterman, \emph{A note on monotone paths in labeled graphs}, Combinatorica
  \textbf{9} (1989), no.~2, 231--232.

\end{thebibliography}

\end{document}